\newcommand{\pic}[2]{\raisebox{-.5\height}{\includegraphics[scale=#2]{#1}}}
\newcommand{\pica}[2]{\raisebox{-.6\height}{\includegraphics[scale=#2]{#1}}}
\newcommand{\picb}[2]{\raisebox{-.3\height}{\includegraphics[scale=#2]{#1}}}
\newcommand{\picc}[2]{\raisebox{-.4\height}{\includegraphics[scale=#2]{#1}}}
\def\AB{\pic{AB}{.300}}
\def\ABreef{\pic{ABreef}{.300}}
\def\Xor{\pic{xor}{.200}}
\def\Yor{\pic{yor} {.200}}
\def\Ior{\pic{ior} {.200}}
\def\Satellite{\pic{satellite}{.200}}
\def\Idor{\pic{idor} {.200}}
\def\Rcurlor{\pic{rcurlor} {.200}}
\def\Lcurlor{\pic{lcurlor} {.20}}
\def\Conway{\pic{Conway} {  .40}}
\def\KT{\pic{KT} {.40}}
\def\mutantone{\pica{mutantone} {.6}}
\def\mutanttwo{\pic{mutanttwo} {.6}}
\def\mutantthree{\pic{mutantthree} { .6}}
\def\mutantbox{\pic{mutantbox}{ .6}}
\def\mutantboxrev{\pic{mutantboxrev}{ .6}}
\def\Sfiftyfive{\pic{S55}{.250}}
\def\Tfiftyfive{\pic{T55}{.250}}
\def\Sfiftysix{\pic{S56}{.250}}
\def\Tfiftysix{\pic{T56}{.250}}
\def\parallels{\pic{parallels}{.250}}
\def\crossparallels{\pic{crossparallels}{.250}}
\def\unknotunor{\picb{unknotunor} {.150}}
\def\xunor{\picc{xunor} {.15}}
\def\iunor{\picc{iunor} {.15}}
\def\idunor{\picb{idunor} {.2}}
\def\infunor{\picb{infunor} {.15}}
\def\iddeltaunor{\picb{iddeltaunor} {.2}}
\def\Braid{\pic{Braid} {.30}}
\def\twicepunc{\pic{twicepunc} {.250}}
\def\Ctangle{\pic{Ctangle} {.30}}
\def\squarepants{\pic{squarepants} {.30}}
\def\squaretorus{\pic{squaretorus} {.350}}
\def\torushole{\pic{torushole} {.350}}
\def\khovanov{\pic{khovanov} {.250}}
\def\khovtangle{\pic{khovtangle} {.350}}
\def\Ktangle{\pic{Ktangle} {.30}}
\def\DGtangle{\pic{DGtangle} {.30}}
\def\Ttangle{\pic{Ttangle} {.250}}
\def\Fhandlebody{\pic{Fhandlebody} {.30}}
\def\Whandlebody{\pic{Whandlebody} {.40}}
\def\Fsurface{\pic{Fsurface} {.30}}
\def\CC{\mathcal {C}}
\newcommand{\bc}{\begin{center}}
\newcommand{\ec}{\end{center}}
\newcommand{\be}{\begin{equation}}
\newcommand{\ee}{\end{equation}}
\newcommand{\x}{\times}
\newtheorem{theorem}{Theorem}
\newenvironment{remark}{\par\smallskip%
\noindent\textbf{Remark.}\  }%
{\par\smallskip}
{\par\smallskip}
\renewenvironment{proof}[1][Proof]{\textit{#1.} }{\hfill \rule{0.5em}{0.5em}}
\begin{document}

\bc{\Large\bf Invariants of genus $2$ mutants\\[3mm]}
{\sc H. R. Morton {\rm and } N. Ryder\\[2mm]}
 {\small \sl Department of Mathematical Sciences\\ University of Liverpool\\ Peach Street, Liverpool L69 7ZL 
}
\ec
\begin{abstract} Pairs of genus $2$ mutant knots can have different Homfly polynomials, for example some $3$-string satellites of Conway mutant pairs. We give examples which have different Kauffman $2$-variable polynomials, answering a question raised by Dunfield et al in their study of genus 2 mutants.  While pairs of genus $2$ mutant knots have the same Jones polynomial, given from the Homfly polynomial by setting $v=s^2$, we give examples whose Homfly polynomials differ when $v=s^3$. We also give examples which differ in a Vassiliev invariant of degree $7$, in contrast to satellites of Conway mutant knots.
\end{abstract}

\section{Introduction}
Genus $2$ mutation of knots was introduced by   Ruberman \cite{Ruberman} in a general 3-manifold. Cooper and Lickorish \cite{CL} give a nice account of an equivalent construction for knots in $S^3$, using genus $2$ handlebodies, and it is this construction that we shall use here.

Genus $2$ mutant knots provide a test-bed for comparing knot invariants, in the sense that they can be shown to share a certain collection of invariants, and so any invariant on which some mutant pair differs must be completely independent of the shared collection. This procedure can be refined by restricting further the class of genus $2$ mutants under consideration, so as to increase the shared collection, and then looking for invariants which differ on some restricted mutants.

In a recent paper \cite{DGST} Dunfield, Garoufalidis, Shumakovitch and Thistlethwaite survey some of the known results about shared invariants for genus $2$ mutants, and show that Khovanov homology is not shared in general. They also give an example of a pair of genus $2$ mutants with 75 crossings which differ on their Homfly polynomial. These are smaller examples than the known satellites of the Conway and Kinoshita-Teresaka knots \cite{MC}.  They ask for examples of genus $2$ mutants which don't share the 2-variable Kauffman polynomial, in the expectation that their $75$ crossing knots, which are out of range of current programs for calculating the Kauffman polynomial, will indeed give such an example.

In this paper we give a number of smaller genus $2$ mutant pairs with different Homfly polynomials, and show that they also have different 2-variable Kauffman polynomials. The smallest examples to date, shown in figure \ref{ourexample}, have $55$ crossings. The fact that   their Kauffman polynomials are different  can be detected without having to make a complete calculation. The difference in their Homfly polynomials persists in this example, and in some but not all of the other examples, after making the substitution $v=s^3$. This substitution  calculates their quantum $sl(3)$ invariant when coloured by the fundamental $3$-dimensional module.

We note too a distinction between general genus $2$ mutants and those arising as satellites of Conway mutant knots, by exhibiting examples of  a pair of genus $2$ mutants which differ on a degree $7$ Vassiliev invariant, while work of Duzhin \cite{Duzhin} ensures that satellites of Conway mutants share all Vassiliev invariants of degree $\le 8$, extended to degree $10$ more recently by Jun Murakami \cite{Jun}.

\section{The general setting}
The satellite knot $K*Q$ of a framed oriented knot $K$ is  constructed, as a framed oriented knot, by taking a framed oriented curve $Q$ in the standard solid torus $V$.  Embed $V$ in ${\bf R}^3$ by following the knot $K$, using the embedding $h:V\to {\bf R}^3$ defined by regarding $V$ as a thickened annulus and carrying the annulus to the framing annulus of $K$. Then $K*Q$ is the curve $h(Q)\subset {\bf R}^3$, with the induced orientation and framing.

In the illustration in figure \ref{figsatellite} the framing of each curve is given implicitly by the blackboard framing.
\begin{figure}[ht]
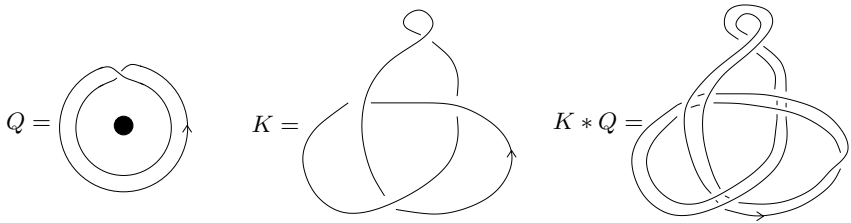

\labellist
\small
\pinlabel {$Q=$} at -60 195
\pinlabel {$K=$} at 430 195
\pinlabel {$K*Q=$} at 1070 195
\endlabellist
\bc
\Satellite
\ec
\caption{Satellite construction} \label{figsatellite}
\end{figure}

We can make
a similar construction, starting from a framed  oriented curve $P$ in the standard genus $2$ handlebody $W$.

The $\pi$-rotation $\tau:W\to W$, illustrated in figure \ref{hyperelliptic},   has $6$ fixed points on $\partial W$, where it restricts to the {\em hyperelliptic involution} with quotient $S^2$. This lies in the centre of the mapping class group of $\partial W$ and is unique up to conjugation by a homeomorphism isotopic to the identity.  

\begin{figure}[ht!]
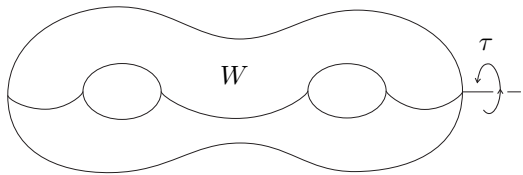

\bc {
\labellist
\pinlabel{$W$} at 281 645 
\pinlabel{$\tau$} at 524 675 
\endlabellist
\Whandlebody}
\ec
\caption{The rotation $\tau$}\label{hyperelliptic}
\end{figure}

Apply $\tau$ to $P$ to get another curve $\tau(P)\subset W$.  
For any embedding  $h:W\to {\bf R}^3$ the pair of knots
 $h(P)$ and $h(\tau(P))$ are called {\em genus $2$ mutants}.

\subsection{Satellites of genus $2$ mutants}
\begin{theorem}\label{g2satellite}
Satellites of genus $2$ mutants are themselves genus $2$ mutants.
\end{theorem}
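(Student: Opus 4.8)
The plan is to show that the satellite operation can be carried out \emph{inside} the handlebody $W$ \emph{before} the genus $2$ mutation is applied, so that the two operations commute and the two satellites appear as a single mutant pair coming from one curve in $W$.

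First I would thicken the pattern curve. Given the framed oriented curve $P\subset W$, take a tubular solid torus neighbourhood $N\subset W$ of $P$ whose longitude records the framing of $P$; the framing then identifies $N$ with the standard solid torus $V$. Inserting the satellite pattern $Q\subset V$ into $N$ produces a new framed oriented curve, call it $P_Q\subset W$. By construction $N$ is carried by any embedding $h:W\to{\bf R}^3$ to the framing neighbourhood of $h(P)$, so $h(N)$ is exactly the solid torus used to build the satellite of $K=h(P)$; hence $h(P_Q)=h(P)*Q=K*Q$. Thus the satellite operation on $P$ has been internalised as the passage $P\mapsto P_Q$ within $W$, compatibly with $h$.

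The key step is to observe that this assignment $P\mapsto P_Q$ is natural under homeomorphisms of $W$. Since $\tau:W\to W$ is a homeomorphism, it carries the framed neighbourhood $N$ of $P$ to a framed neighbourhood $\tau(N)$ of $\tau(P)$, taking the framing of $P$ to the framing of $\tau(P)$. Under the framing identifications of $N$ and of $\tau(N)$ with $V$, the restriction of $\tau$ therefore becomes the identity of $V$, so it leaves the inserted pattern $Q$ unchanged. Consequently $\tau(P_Q)=\tau(P)_Q$, and applying $h$ gives $h(\tau(P_Q))=h(\tau(P))*Q=K'*Q$, where $K'=h(\tau(P))$.

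Putting these together, the two satellites $K*Q$ and $K'*Q$ are precisely $h(P_Q)$ and $h(\tau(P_Q))$, which is exactly the genus $2$ mutant pair determined by the single curve $P_Q\subset W$ together with the embedding $h$; this is the assertion. I expect the only point needing real care to be the bookkeeping of framings and orientations in the identity $\tau(P_Q)=\tau(P)_Q$: one must check that $\tau$, being an orientation-preserving homeomorphism that by definition sends the framing of $P$ to that of $\tau(P)$, reproduces the \emph{same} pattern $Q$ rather than a reflected or reversed copy. Once the trivialisations are arranged so that $\tau$ acts as the identity on the thickened neighbourhoods, the commutation of satellite and mutation is formal.
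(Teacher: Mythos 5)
Your proof is correct and is essentially the paper's own argument: the paper likewise forms the satellite $P*Q$ inside $W$ (your $P_Q$) and uses $h(P)*Q=h(P*Q)$ and $h(\tau(P))*Q=h(\tau(P*Q))$ to exhibit the two satellites as the mutant pair of a single curve in $W$. The only difference is that you spell out, via the framed tubular neighbourhood identification with $V$, why $\tau(P*Q)=\tau(P)*Q$, a point the paper compresses into the phrase ``with the matching framing and orientation.''
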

\begin{proof}
The satellite $h(P)*Q$ of the framed knot $h(P)$ using a pattern $Q$ in the thickened annulus  is the same as the knot constructed by taking the satellite $P*Q$ in $W$ of the curve $P$ and then applying $h$, since the framings correspond. Then $h(P)*Q=h(P*Q)$. Similarly   $h(\tau(P))*Q=h(\tau(P)*Q)=h(\tau(P*Q))$ with the matching framing and orientation. Hence the satellites $h(P)*Q$ and $h(\tau(P))*Q$ of the genus $2$ mutants $h(P)$ and $h(\tau(P))$ are genus $2$ mutants.
\end{proof}

It is   easy to establish that genus $2$ mutants have the same Jones polynomial, using essentially the argument of Morton and Traczyk \cite{MT} in establishing that satellites of Conway mutants have the same Jones polynomial.

This argument is given directly in \cite{CL} and \cite{DGST} but we repeat it here for comparison with our extensions to some of the Homfly cases.

\begin{theorem} Genus 2 mutants have the same Jones polynomial.
\end{theorem}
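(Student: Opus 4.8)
The plan is to reduce the statement to a fact about the Kauffman bracket skein module of the handlebody $W$ and then to exploit the extra symmetry that the \emph{unoriented} bracket enjoys. Recall that the Jones polynomial is a specialisation of the Kauffman bracket, and that the bracket of a link lying inside an embedded handlebody $h(W)$ can be computed by first expanding the pattern as a linear combination of simple diagrams \emph{inside} $W$ — that is, by passing to its class in the Kauffman bracket skein module $\mathcal{S}(W)$ — and only then applying $h$ and closing up in the fixed complementary region $S^3\setminus h(W)$. Since $h(P)$ and $h(\tau(P))$ use the \emph{same} embedding $h$, their brackets agree as soon as $P$ and $\tau(P)$ carry the same class in $\mathcal{S}(W)$. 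So it suffices to prove that $\tau$ acts as the identity on $\mathcal{S}(W)$.

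For the second step I would use the product structure on $W$. The genus $2$ handlebody is a thickened pair of pants, $W\cong F\times I$ with $F$ a planar surface (a disc with two holes), and stacking in the $I$–direction makes $\mathcal{S}(W)$ into an algebra which, because $F$ is planar, is \emph{commutative}. This algebra is spanned by the multicurves built from the three simple closed curves $x_1,x_2,x_{12}$ that run once round the first hole, once round the second hole, and once round both. The involution $\tau$ carries each handle of $W$ to itself, reversing its core, so each of $x_1,x_2,x_{12}$ — and hence each spanning multicurve assembled from them — is sent to an isotopic curve with orientations reversed and, at worst, with its strands restacked. As the Kauffman bracket is insensitive both to orientation and, by commutativity of $\mathcal{S}(W)$, to the order of stacking, $\tau_*$ fixes the whole spanning set. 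Therefore $\tau_*=\mathrm{id}$, and the reduction of the first step finishes the proof.

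The crux — and the reason the argument is genuinely special to the Jones polynomial — lies in the two structural inputs of the middle step: that $\mathcal{S}(W)$ is commutative and generated by the three symmetric curves, and that $\tau$ fixes each of them only \emph{after} orientation is discarded. The involution is not isotopic to the identity (it acts as $-1$ on $H_1(W)$), so a priori $\tau_*$ could be a nontrivial automorphism; what rescues us is precisely that the unoriented bracket cannot see the orientation reversal, and that the planar product structure removes any ordering ambiguity. I expect the main obstacle to be making the first reduction watertight and establishing the generation and commutativity of $\mathcal{S}(W)$, rather than the symmetry check itself. This also pinpoints why the same strategy breaks for the Homfly and Kauffman polynomials: there the relevant skein modules are oriented and noncommutative, the analogues of $x_1,x_2,x_{12}$ are no longer fixed by $\tau$, and $\tau_*$ ceases to be the identity — which is exactly the phenomenon exploited in the rest of the paper.
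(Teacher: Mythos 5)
Your proposal is correct and is essentially the paper's own argument: both reduce the claim to the Kauffman bracket skein of the genus $2$ handlebody viewed as a thickened disc with two holes, observe that this skein is spanned by crossingless multicurves parallel to the boundary components (your monomials in $x_1,x_2,x_{12}$), and conclude that these are fixed by $\tau$ once orientation is discarded, so $\tau(P)=P$ in the skein and the brackets of $h(P)$ and $h(\tau(P))$ agree. Your additional care about the stacking order (handled via commutativity, since $\tau$ a priori gives only an anti-automorphism) is a detail the paper passes over silently, but it does not change the substance of the proof.
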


\begin{proof} It is enough to work with the Kauffman bracket, defined by the usual skein relations
\bc
$\xunor\quad=\quad A\iunor\quad+\ A^{-1}\ \infunor$ \ , \quad
$\iddeltaunor\quad=\  -(A^2+A^{-2})\ \idunor$\ .
\ec
 We can treat a framed curve $P$ in $W$ as an element in the Kauffman bracket skein of a surface $S$ with $W\cong S\times I$ when calculating the Kauffman bracket of the genus $2$ mutants $h(P)$ and $h(\tau(P))$.  We   take $S$ to be a disc with $2$ holes. The involution $\tau$ on $W$ is induced by the involution on $S$ which preserves the boundary components.

The Kauffman bracket skein of $S$ is spanned by diagrams in $S$ without crossings or null-homotopic curves. Such diagrams consist of unoriented curves parallel to the boundary components and are hence all unchanged by the involution $\tau$ on $S$. Then $\tau(P)=P$ as elements of the skein of $S$, and so $h(\tau(P))=h(P)$ as   elements of the skein of the plane. Since any diagram $K$ in the plane represents  $<K>\unknotunor$ in the skein of the plane, where $<K>$ is the Kauffman bracket of $K$, it follows  that the genus $2$ mutants $h(\tau(P))$ and $h(P)$ have the same Kauffman bracket.
\end{proof}

Theorem \ref{g2satellite} then shows that genus $2$ mutants share all their satellite Jones invariants.

\subsection{Genus $2$ embeddings following a $2$-tangle}

We now show how to use a framed oriented $2$-tangle $F$ to define an embedding $h:W\to {\bf R}^3$ in such a way that we can readily compare the framed curves  $h(P)$ and $h(\tau(P))$. This embedding is said to \emph{follow} the tangle $F$.

Attaching the two thickened arcs of $F$ to a solid ball   results in a genus $2$ handlebody as in figure \ref{Fhandlebody}
which is to be the image of $h$.

\begin{figure}[ht!]
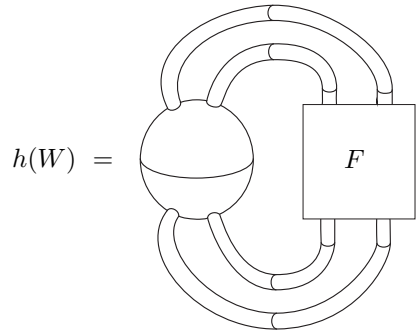

\bc $h(W)\ =\ $
{
\labellist
\pinlabel{$F$} at 394 472 
\endlabellist
\Fhandlebody}
\ec
\caption{The handlebody following a tangle $F$}\label{Fhandlebody}
\end{figure}

To specify $h$ we   assume that $F$ has a framing, in other words each arc has a specified ribbon neighbourhood.
Define a surface $S_F$ in ${\bf R}^3$ consisting of a square plus two ribbons following the framing of $F$, illustrated in figure \ref{Fsurface} using the tangle $F$ from figure \ref{mutant}.

\begin{figure}[ht!]
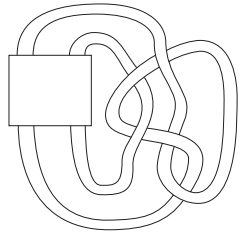

\bc {
\labellist
\endlabellist
\Fsurface}
\ec
\caption{The surface following a framed tangle}\label{Fsurface}
\end{figure}

Regard $W$ as the thickening, $S\x I$, of  a standard surface $S$,   and define $h$ by thickening a map from $S$ to $S_F$. 
Our choice of $S$, and hence the description of $h$,    depends on the nature of the tangle $F$.
We distinguish two types of oriented $2$-tangle:
\begin{enumerate}
\item A {\em pure} tangle, where the arcs join the two bottom points to the corresponding top points on the same side.
\item A {\em transposing} tangle, where the arcs join the two bottom points to the top points on opposite sides.
\end{enumerate}

\begin{remark} The terms \emph{parallel} and \emph{diagonal} are used in \cite{MT} for the connections in these two types of tangle.
\end{remark}

1. When $F$ is a \emph{pure} tangle the surface $S_F$ is a disc with $2$ holes. Take $S$  to be the square with two ribbons in figure \ref{squarepants} and map $S$ to $S_F$ by taking the square to the square, and the two ribbons to the ribbons around the arcs of $F$. 
\begin{figure}[ht!]
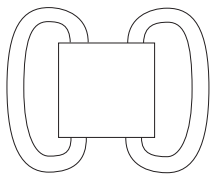

\bc
\squarepants
\ec\caption{The disc with $2$ holes}\label{squarepants}
\end{figure}

2. When $F$ is a \emph{transposing} tangle the surface $S_F$ is a torus with one hole.  Take $S$ to be the square with two ribbons in figure \ref{squaretorus} and again map $S$ to $S_F$ by mapping the square to the square, and the ribbons around the arcs of $F$.

\begin{figure}[ht!]
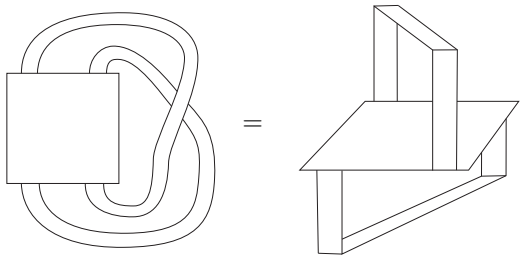

\bc
\torushole \quad = \quad \squaretorus
\ec\caption{The torus with one hole}\label{squaretorus}
\end{figure}

We   say that $h$ has been constructed by {\em following} the tangle $F$. An embedded handlebody in ${\bf R}^3$ always arises by following some tangle $F$,  although the choice of $F$ is not unique.

We can get a good view of the pair of mutants constructed from a curve $P\subset W$ by following a tangle $F$.
The map $\tau:W\to W$ is a thickened map from $S$ to $S$, which maps the square and each ribbon to itself.  In case 1, $\tau$ is $\pi$-rotation about the horizontal $x$-axis, which we write as $\tau_1$ when restricted to the square.  In case 2, $\tau$ is $\pi$-rotation about the $z$-axis orthogonal to the plane of the square, and we write $\tau_2$ for this rotation restricted to the square. These rotations are indicated in figure \ref{figrotations}.

\begin{figure}[ht!]
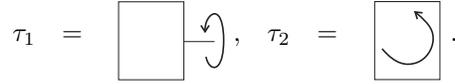

\bc
$\tau_1$\quad {=} \quad {\labellist
\endlabellist
\mutanttwo }\ ,\quad  $\tau_2$\quad {=} \quad{\labellist
\endlabellist
\mutantthree}\ .
\ec
\caption{Rotations of the square}\label{figrotations}
\end{figure}

Draw $P$ itself as a diagram on the surface $S$, so that its framing is the blackboard framing from $S$. We can assume that $P$ runs through each ribbon of $S$ in a number of parallel curves, possibly with different orientations. Suppose that there are $m_1$ curves in one ribbon and $m_2$ in the second, numbered from the attachment to the top edge of the square. The rest of the curve $P$ determines a framed $m$-tangle $T$ in the square, with $m=m_1+m_2$. 

In the case of a pure tangle $F$ the knot $h(P)$ has a diagram as shown in figure \ref{parallels},  where $F^{(m_2,m_1)}$ is the $(m_2,m_1)$ parallel of the framed tangle $F$ with appropriate orientations, and the tangle $T$ lies in the square. The mutant knot $\tau(h(P))$ has $\tau_1(T)$ in place of $T$, with all orientations in $F^{(m_2,m_1)}$  reversed.

\begin{figure}[ht!]
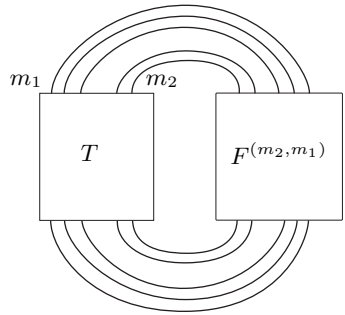

\bc
{
\labellist
\small
\pinlabel {$T$} at 136 420
\pinlabel {$F^{(m_2,m_1)}$} at 421 420
\pinlabel {$m_1$} at 40 529
\pinlabel {$m_2$} at 245 529
\endlabellist}
\parallels
\ec
\caption{The diagram for a knot following a pure tangle $F$}\label{parallels}
\end{figure}

In the case of a transposing tangle the diagram is shown in figure \ref{crossparallels}, where $\tau(h(P))$ now has $\tau_2(T)$ in place of $T$.

\begin{figure}[ht!]
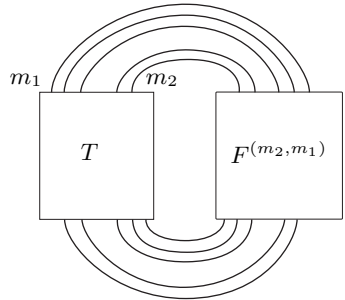

\bc
{
\labellist
\small
\pinlabel {$T$} at 136 420
\pinlabel {$F^{(m_2,m_1)}$} at 421 420
\pinlabel {$m_1$} at 40 529
\pinlabel {$m_2$} at 245 529
\endlabellist}
\crossparallels
\ec
\caption{The diagram for a knot following a transposing tangle $F$}\label{crossparallels}
\end{figure}

\subsection{Conway mutants}
 For an oriented tangle $T$ write $\tau_1(T)$ and $\tau_2(T)$ for the $\pi$-rotations of $T$ about the $x$-axis and $z$-axis respectively, as used above. Then $\tau_3(T)=\tau_1\tau_2(T)$ is the $\pi$-rotation of $T$ about the $y$-axis, so that 

\bc
$\tau_1(T)$\quad {=} \quad {\labellist
\pinlabel{$T$} at 96 668
\endlabellist
\mutanttwo }\ ,\quad  $\tau_2(T)$\quad {=} \quad{\labellist
\pinlabel{$T$} at 102 629
\endlabellist
\mutantthree}\ ,\quad $\tau_3(T)$\quad {=} \quad
{\labellist
\pinlabel{$T$} at 129 709
\endlabellist
\mutantone }\ ,
\ec

 The term \emph{mutant} was coined by Conway, and  refers to the following
general construction.

Suppose that a knot $K$ can be decomposed into 
two oriented  $2$-tangles $F$ and $G$ as in figure \ref{mutant}. 
Any knot $K'$
 formed by replacing the  tangle $F$ with the tangle
$F'=\tau_i(F), i=1,2,3$, 
reversing its string orientations if
necessary  is called a 
 \emph{(Conway) mutant} of $K$. 
\begin{figure}[ht!]
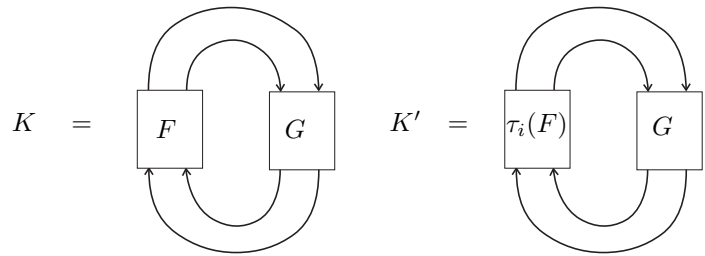

\begin{center}
$K$ \quad {=} \quad
{
\labellist
\pinlabel{$F$} at 100 733
\pinlabel{$G$} at 181 733
\endlabellist
\mutantbox}\qquad
$K'$\quad {=} \quad{\labellist
\pinlabel{{$\tau_i(F)$}} at 102 735
\pinlabel{{$G$}} at 181 735
\endlabellist
\mutantbox}
\end{center}
\caption{A knot with mutants}\label{mutant}
\end{figure}

The two  $11$-crossing knots  in figure \ref{CKT},
 found by 
Conway and  Kinoshita-Teresaka, are the best-known example of a pair of mutant knots. 

\begin{figure}[ht!]
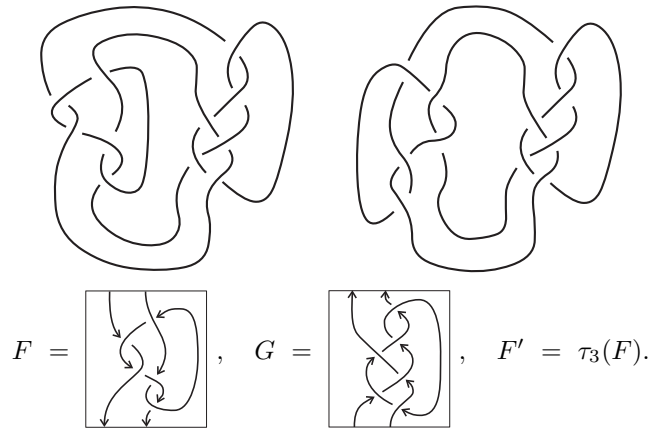

\begin{center}{  \Conway\  \qquad
 \KT\ }\\[2mm]
$F\ =\ \Ktangle\ , \quad G\ =\ \Ctangle\ , \quad F'\ =\ \tau_3(F).$
\end{center}
\caption{The Conway and Kinoshita-Teresaka mutant pair, and their constituent tangles}\label{CKT}
\end{figure}

\subsection{Conway mutants as genus $2$ mutants}

Any knot $K$ made up of two $2$-tangles $F$ and $G$ as in figure \ref{mutant} lies in two genus $2$ handlebodies, one following $F$ and the other following $G$. 
Each of these handlebodies defines a genus $2$ mutant of $K$. We call them $K_F$ and $K_G$ respectively.   

Since $K$ is a {\em knot}, one of the tangles $F,G$ is pure and the other is transposing. Let us suppose that $F$ is pure. Then $K_F$ and $K_G$ have diagrams as shown in  figure \ref{mutantrev}.

\begin{figure}[ht!]
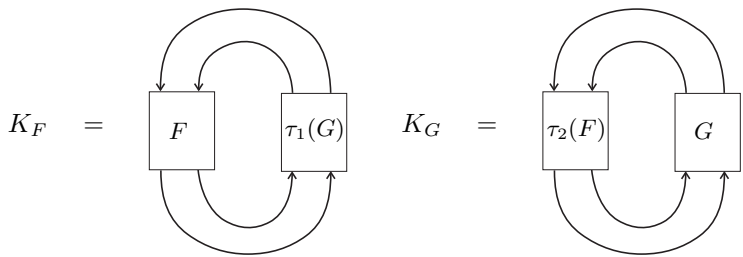

\begin{center}
$K_F$ \quad {=} \quad
{
\labellist
\small
\pinlabel{$F$} at 100 733
\pinlabel{$\tau_1(G)$} at 186 733
\endlabellist
\mutantboxrev}\qquad
$K_G$ \quad {=} \quad
{
\labellist
\small
\pinlabel{$\tau_2(F)$} at 103 733
\pinlabel{$G$} at 183 733
\endlabellist
\mutantboxrev}

\end{center}
\caption{Genus $2$ mutants of $K$}\label{mutantrev}
\end{figure}

We can repeat the construction on these knots.  $K_F$ lies in the handlebody following $\tau_1(G)$. Since $\tau_1(G)$ is transposing we get a genus $2$ mutant $K_{F \tau_1(G)}$. The same knot $K_{G \tau_2(F)}=K_{F \tau_1(G)}$ arises as a genus $2$ mutant of $K_G$ from the handlebody following $\tau_2(F)$, shown in figure \ref{doublemutant}.

\begin{figure}[ht!]
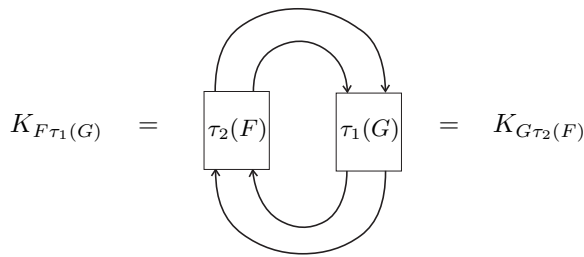

\begin{center}
$K_{F \tau_1(G)}$ \quad {=} \quad
{
\labellist
\small
\pinlabel{$\tau_2(F)$} at 103 733
\pinlabel{$\tau_1(G)$} at 186 733
\endlabellist
\mutantbox} \quad $=\quad K_{G \tau_2(F)}$
\end{center}
\caption{A further genus $2$ mutant, completing the Conway mutants of $K$}\label{doublemutant}
\end{figure}

Rotation of the diagrams of $K_F$ and $K_{F \tau_1(G)}$ about the $x$-axis shows that, up to a choice of string orientation, these three knots $K_F, K_G$ and $K_{F \tau_1(G)}$ are the three Conway mutants of $K$ given by replacing $F$ with $\tau_1(F), \tau_2(F)$ or $\tau_3(F)$ respectively.

It follows that satellites of Conway mutants, with this orientation convention, are related by genus $2$ mutation.

We have already seen that these must all share the same Jones polynomial. We 
now look at the Homfly polynomial of genus $2$ mutants.

\section{The Homfly polynomial of genus $2$ mutants}
We   use the framed version of the Homfly polynomial based on the skein relations
\bc $\Xor\quad -\quad\Yor \quad=\quad (s-s^{-1}) \Ior$\\[2mm]
$\Rcurlor\quad = \quad v^{-1}\ \Idor\ , \qquad \Lcurlor\quad=\quad v\  \Idor\  .$
\ec

The Homfly polynomial of a link in ${\bf R}^3$ is unchanged if the orientations of {\em all} its components are reversed.  The Homfly skein of the annulus $\CC$ is unchanged when the annulus is rotated by $\pi$, reversing its core orientation, and at the same time all string orientations are reversed. To compare the Homfly polynomials of two genus $2$ mutants $h(P)$ and $h(\tau(P))$, or indeed any satellite of them, it is enough to consider $h(\tau(P))$ with orientation reversed. 

Given a framed  oriented curve $P$ in $W$ we may then regard $W$ as the thickened surface $S$ which is the disc with 2 holes in figure \ref{squarepants}, and compare $P$ with $\tau(P)$ after reversing the orientation of $\tau(P)$.  If we can present $P$ as an $(m_1+m_2)$-tangle in the square with $m_1$ and $m_2$ curves following the two ribbons then we can write $P$ in the skein of the twice-punctured disc $S$ as a linear combination of simpler curves, each presented by a tangle with at most this number of curves in the ribbons.

 Even if our curve $P$ has originally been drawn in a picture following a transposing tangle, with $m_1$ and $m_2$ curves around the ribbons there, it can be redrawn as a curve following a pure tangle with the same numbers $m_1$ and $m_2$.

The first observation is that if $m_1=m_2=1$ then the genus $2$ mutants are Conway mutants, and their Homfly polynomials agree. This is because any $2$-tangle can be reduced to a linear combination of $2$-tangles which are unchanged under $\tau_1$ plus string orientation reversal.

In the case $m_1,m_2\le2$  the curve $P$ again reduces in the skein of $S$ to a combination of curves in the skein of $S$ which are again unchanged by the rotation $\tau$ with reversal of string orientation. This is essentially the result of Lickorish and Lipson \cite{LL}. There are a couple of cases depending on the relative orientation of the curves in the two ribbons. This argument then covers the case of any $2$-string satellite of a pair of Conway mutants, as these can be presented as genus $2$ mutants with $m_1=m_2=2$.

The existence of $3$-string satellite knots around the Conway and Kinoshita-Teresaka mutant pair with different Homfly polynomials, described in detail in \cite{MC}, following the earlier calculations by Morton and Traczyk, shows that there are some genus $2$ mutants with $m_1=m_2=3$, constructed by following the constituent tangle $G$ in figure \ref{mutant}, which have different Homfly polynomials. Take, for example, the tangle $T$ to be the $3$-parallel $F^{(3,3)}$ of the tangle $F$ in figure \ref{mutant} composed with the braid $\sigma_1\sigma_2$ and follow the tangle $G$ to give a knot with $101$ crossings. This is in fact a satellite of the Conway knot, whose genus $2$ mutant has $\tau_2(T)$ in place of $T$.

\subsection{Genus $2$ mutants with different Kauffman polynomials}
In \cite{DGST} the authors exhibit a pair of genus $2$ mutants with $75$ crossings, which have different Homfly polynomials, and they ask whether genus $2$ mutants can have different Kauffman polynomials. Although   confident that this is the case they were unable to calculate the polynomials for their $75$ crossing example, constructed following the pure $7$-crossing tangle $DG$ shown in figure \ref{DG}.

\begin{figure}[ht!]
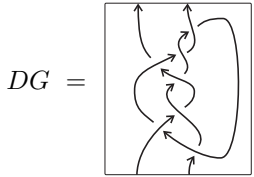

\bc
$DG \ =\ $
\DGtangle
\ec
\caption{The $7$-crossing tangle $DG$}\label{DG}
\end{figure}

We give here a number of examples of genus $2$ mutants with different Kauffman polynomials.

\begin{theorem}\label{g2kauffman}
The genus $2$ mutant pair of knots constructed by following the tangle $DG$, with $m_1=m_2=3$, using the $6$-string positive permutation braid $B=\sigma_1\sigma_2\sigma_1\sigma_3\sigma_2\sigma_4\sigma_3\sigma_5\sigma_4$, shown in figure~\ref{braid}, or its reverse $\tau_1(B)$ as the tangle $T$, have different Kauffman polynomials.
\end{theorem}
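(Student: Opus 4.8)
The plan is to carry out the comparison entirely inside the Kauffman skein of the twice-punctured disc $S$, in the same spirit as the arguments already used for $m_1,m_2\le 2$. Since $DG$ is a pure tangle, each of the two knots is presented as in figure \ref{parallels}, with the tangle $T$ (either $B$ or $\tau_1(B)$) in the square and its $6=3+3$ strands running around the two ribbons, which carry the $(3,3)$-parallel of $DG$. As in the Homfly discussion, I would first reverse the orientation of the mutant, so that the two knots differ only by replacing $T$ with $\tau_1(T)$ in the square. The next step is to rewrite the three parallel strands around each ribbon in terms of the standard idempotents $e_\lambda$, $\lambda\vdash 3$, of the Kauffman skein of three strands, together with the lower turn-back terms that the Kauffman relation produces over and above the Hecke-type summands. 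Colouring each ribbon by such an idempotent expresses the Kauffman polynomial of each knot as a finite sum of decorated invariants of $DG$,
\[
\langle h(T)\rangle \;=\; \sum_{\lambda,\mu\,\vdash\,3} c_{\lambda\mu}(T)\cdot\langle DG_{\lambda,\mu}\rangle ,
\]
where $DG_{\lambda,\mu}$ is the $7$-crossing tangle $DG$ with its two strands coloured by $e_\lambda$ and $e_\mu$ and then closed; when the endomorphism space of $e_\lambda\otimes e_\mu$ has dimension greater than one the ``scalar'' $c_{\lambda\mu}(T)$ must be read as the class of $T$ in that algebra, paired against a correspondingly vector-valued $\langle DG_{\lambda,\mu}\rangle$.

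The key structural observation is that $\tau_1$ is the rotation of the square about the horizontal axis and carries each ribbon to itself, so it preserves the pair $(\lambda,\mu)$ and acts only within each block $e_\lambda\otimes e_\mu$. When the corresponding endomorphism space is one-dimensional the action is a scalar, which the built-in orientation reversal renders trivial, so that $c_{\lambda\mu}(B)=c_{\lambda\mu}(\tau_1 B)$ and that block drops out of the difference. This is exactly the mechanism behind the vanishing found by Lickorish and Lipson \cite{LL} for $m_1,m_2\le 2$, where all the analogous blocks are one-dimensional. For $m_1=m_2=3$ the new feature is that there are blocks whose endomorphism spaces are at least two-dimensional---the first arising through the mixed-symmetry idempotent $e_{(2,1)}$---on which $\tau_1$ can act non-trivially. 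Consequently the difference of the two Kauffman polynomials collapses to the contribution of these sensitive blocks, and the whole problem reduces to showing that this contribution is non-zero.

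It then remains to verify two things. First, that the braid $B$ and its reverse $\tau_1(B)$ genuinely have different classes in the sensitive block: this is a computation internal to the six-strand Kauffman skein, and the point of choosing the positive permutation braid $B=\sigma_1\sigma_2\sigma_1\sigma_3\sigma_2\sigma_4\sigma_3\sigma_5\sigma_4$ is that both $B$ and $\tau_1(B)$ are again positive permutation braids with transparent normal forms, so their classes can be written down and compared directly. Second, and this is the real content, that the closure through $DG$ does not annihilate this difference, i.e.\ that the decorated invariant of $DG$ pairs non-trivially with the $\tau_1$-eigenspace of the sensitive block. I expect this non-cancellation to be the main obstacle: distinct skein elements could in principle close up to the same polynomial, and it is precisely the choice of $DG$ (and of $B$) that is designed to prevent this.

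This last point is where the remark that the difference can be detected \emph{without a complete calculation} enters. Rather than attempting the Kauffman polynomial of the full, out-of-range $75$-crossing knot, I would only evaluate $DG$ coloured on each strand by the mixed idempotent $e_{(2,1)}$---a computation on the $7$-crossing tangle and its rotation, not on the large knot---and check that the resulting two-variable polynomial is changed by the mutation. A clean way to finish is to isolate a single extreme coefficient of this small decorated invariant, in a variable where cancellation between the surviving terms is visibly impossible, thereby certifying that the two Kauffman polynomials differ without ever assembling either one in full.
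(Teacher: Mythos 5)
Your strategy --- decomposing the $3$-parallel around each ribbon into idempotent blocks of the Kauffman skein and arguing that the mutation difference is confined to blocks whose endomorphism algebra has dimension greater than one --- is a reasonable program, but as written it does not prove the theorem. The two steps you defer are precisely the content of the statement: (i) that $B$ and $\tau_1(B)$ have different classes in the sensitive block, and (ii) that the pairing with the $(3,3)$-parallel of $DG$ does not annihilate that difference. You flag (ii) yourself as ``the main obstacle'' and propose to settle it by evaluating $DG$ coloured by the mixed idempotent, but you never carry this out, nor argue why it must come out non-zero; a reduction of the theorem to an unperformed computation is a genuine gap, not a proof. There is also a structural point needing justification: in the Kauffman (BMW) setting the skein of three strands has, besides the blocks labelled by partitions of $3$, an additional three-dimensional turn-back block labelled by the partition $(1)$, so your claim that the only sensitive blocks arise through $e_{(2,1)}$ does not carry over verbatim from the Homfly/Hecke picture, and the one-dimensional-block argument via ``orientation reversal'' is out of place here since the Kauffman skein is unoriented.

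The paper's actual proof sidesteps all of this with one observation your proposal is missing: by Lickorish \cite{L}, when the Homfly and Kauffman polynomials are each written as polynomials in $z=s-s^{-1}$ with coefficients in ${\bf Z}[v^{\pm 1}]$, their constant terms $P_0(v)$ coincide. The two knots in question are closed $9$-braids with $72$ crossings, so their Homfly polynomials are within easy reach of the Hecke-algebra program of Morton and Short \cite{MS}; the computation shows that the constant terms $P_0(v)$ differ, and hence the Kauffman polynomials differ --- no Kauffman computation and no skein decomposition is needed. This is also what ``detected without a complete calculation'' means in this context: the Kauffman polynomial is never assembled, but the Homfly polynomial genuinely is computed. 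If you wish to rescue your approach you must actually perform your step (ii); but the paper's route, which converts an out-of-range two-variable Kauffman question into a feasible Homfly calculation plus a quotable theorem of Lickorish, is the key idea your proposal lacks.
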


\begin{figure}[ht!]
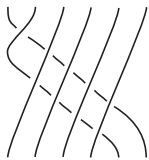

\begin{center}
\Braid
\end{center}
\caption{The braid $B=\sigma_1\sigma_2\sigma_1\sigma_3\sigma_2\sigma_4\sigma_3\sigma_5\sigma_4$}
\label{braid}
\end{figure}

\begin{proof}
The two knots are presented as closed $9$-braids with $72$ crossings, so it is quite easy to calculate their Homfly polynomials using the Morton-Short program \cite{MS} based on the Hecke algebras. When these are compared, as polynomials in $z=s-s^{-1}$ with coefficients in ${\bf Z}[v^{\pm 1}]$ they can be seen to differ in their constant term $P_0(v)$.  Now Lickorish shows in \cite{L} that $P_0(v)$ is also the constant term of the Kauffman polynomial when expanded similarly, and hence the Kauffman polynomials of the two knots are different.
\end{proof}
\begin{remark}This argument could not have been used for the $75$ crossing knots in \cite{DGST}, since their Homfly polynomials have the same constant term $P_0(v)$.
\end{remark}
\subsection{Vassiliev invariants}
We compared the Vassiliev invariants of the genus $2$ mutants, by expanding the difference of their Homfly polynomials as a power series in $h$
taking $s=e^{ h/2}$ and $v=s^N=e^{Nh/2}$. In the $75$ crossing examples from \cite{DGST}  the  lowest degree term of the difference is
\[N \left( N-1 \right)  \left( N-2 \right)  \left( N+2 \right)\left( N+1 \right)  \left( 13\,{N}^{2}+51 \right)h^{11}, \] while for our $72$ crossing example it is
\[3N(N-1)(N-2)(N-3)(N+3)(N+2)(N+1)h^7.\]
This shows that the $72$ crossing knots differ in a Vassiliev invariant of degree at most $7$.
Consequently satellites of Conway mutants share more Vassiliev invariants than general genus $2$ mutants, since  they have all Vassiliev invariants of degree $\le 10$ in common, using the result from \cite{MC} that Vassiliev invariants of degree $\le k$ of  a satellite $K*Q$  are Vassiliev invariants of $K$ of the same degree, and Jun Murakami's result \cite{Jun} about Vassiliev invariants of Conway mutants. 

\subsection{The Homfly invariants with $v=s^3$}
In our $72$ crossing examples the string orientations around each ribbon are all in the same sense 
 $+++$, and as a result the knots have the same  Homfly invariant after the substitution $v=s^3$. This is  a general consequence of the analysis of the Kuperberg skein of the surface $S$ in \cite{MR} for the case $m_1=m_2=3$ in which all the orientations  around the ribbons are $+$.

In contrast the $75$ crossing examples in \cite{DGST} use a $6$-tangle $T$, again with $m_1=m_2=3$, where the orientations of the three strands around one of the ribbons are $++-$ while around the other they are $+++$. In this case the Homfly polynomials remain different
when $v=s^3$.
The difference,  as a Laurent polynomial in $s$, is:
\begin{eqnarray*}
&{}& {s}^{-28}\left( {s}^{4}-{s}^{2}+1 \right)  \left( {s}^{4}+{s}^{3}+{s}
^{2}+s+1 \right)  \left( {s}^{4}-{s}^{3}+{s}^{2}-s+1 \right) \\
&{}& \ \ \ \left( {s}^{8}+1 \right)  \left( {s}^{6}+{s}^{5}+{s}^{4}+{s}^{3}+{s}^{2}+s+1
 \right)  \left( {s}^{6}-{s}^{5}+{s}^{4}-{s}^{3}+{s}^{2}-s+1 \right)\\
&{}& \ \ \ \left( {s}^{2}-s+1 \right) ^{2} \left( {s}^{2}+s+1 \right) ^{2}
 \left( {s}^{4}+1 \right) ^{2} \left( {s}^{2}+1 \right) ^{3} \left( s-
1 \right) ^{11} \left( s+1 \right) ^{11}
\end{eqnarray*} 

We had originally tried to make use of the difference when $v=s^3$ of the $75$-crossing examples to show that the Kauffman polynomials are also different. We  planned to argue through the comparison of the Homfly polynomials of a certain $2$-string satellite at $v=s^4$, without actually calculating this Homfly polynomial, which would be well out of range. Our aim was to make use of a comparison in \cite{Morton} between this evaluation of the satellite invariant and a different evaluation of the Kauffman polynomial of the original knots, knowing something of the evaluations of the satellite invariant at $v=s^3$.  Unfortunately the difference in the invariants at $v=s^3$ contains a factor $ \left( {s}^{6}+{s}^{5}+{s}^{4}+{s}^{3}+{s}^{2}+s+1
 \right)  $ which means that the agreement of the evaluations of the satellite at $v=s^4$ can not be excluded.

This has also proved to be the case in any other examples that we have found where the evaluations at $v=s^3$ are different, so there may be some underlying reason behind this in general.

\subsection{Smaller examples}
Inspired by the combinatorial interpretations of the $v=s^3$ substitution in leading to the Kuperberg skein of the twice-punctured disc we have found a pair of  examples following $DG$ with $m_1=3,m_2=2$ and orientations $++-$ and $+-$.  The curve $P$ is shown in figure \ref{Phandlebody} as a diagram in the disc with two holes, $S$, along with the resulting $5$-tangle $T$.  
\begin{figure}[ht!]
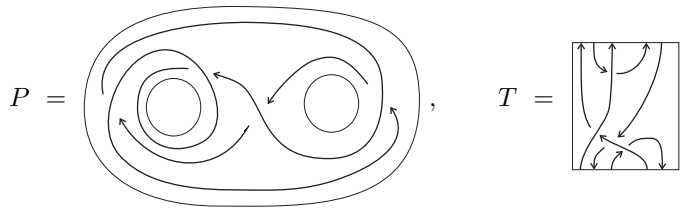

\begin{center}
$P\  = \ $
\twicepunc\ , \qquad $T\  = \ $
\Ttangle 
\end{center}
\caption{The curve $P$ in the standard handlebody, and related tangle $T$}\label{Phandlebody}
\end{figure}

We construct two $55$-crossing  genus $2$ mutants    from $P$ by following the tangle $DG$, to give 
the knot $S_{55}$, shown in figure \ref{ourexample}.  Its mutant partner $S^{'}_{55}$ is given by applying the rotation $\tau_1$ to the tangle $T$.
\begin{figure}[ht!]
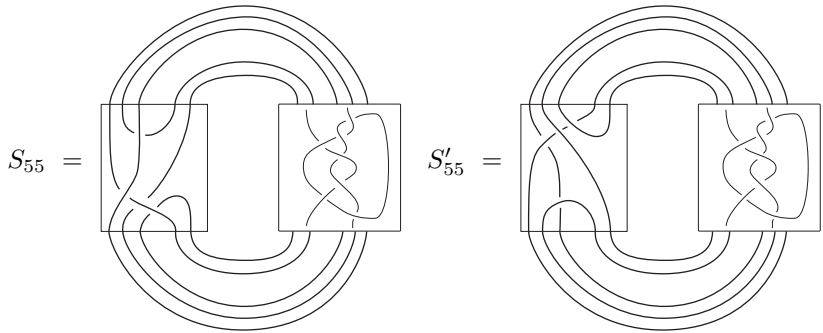

\begin{center}
$S_{55}\  = \ $
\Sfiftyfive\quad $S'_{55}\  = \ $
\Tfiftyfive 
\end{center}
\caption{Two $55$-crossing genus $2$ mutants with different Homfly and Kauffman polynomials}\label{ourexample}
\end{figure}
\begin{theorem} \label{S55}
The knots $S_{55}$ and $S'_{55}$ shown in figure \ref{ourexample} have different Homfly and Kauffman polynomials. Their Homfly polynomials still differ after the substitution $v=s^3$. 
\end{theorem}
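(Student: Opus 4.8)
The plan is to reduce all three assertions to a single Homfly computation. First I would take the diagrams of $S_{55}$ and $S'_{55}$ from figure \ref{ourexample} and rewrite each as a closed braid, so that the Morton--Short program \cite{MS} based on the Hecke algebras can be applied; following the tangle $DG$ with the $5$-tangle $T$ of figure \ref{Phandlebody}, the two knots should close up on few strings with a manageable crossing count, exactly as in the $72$-crossing case of Theorem \ref{g2kauffman}. I would then compute the Homfly polynomials $P(S_{55})$ and $P(S'_{55})$ and form their difference $\Delta = P(S_{55}) - P(S'_{55})$, expanded as a polynomial in $z = s - s^{-1}$ with coefficients in ${\bf Z}[v^{\pm 1}]$.

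With $\Delta$ in hand each claim becomes a finite check. That $\Delta \neq 0$ is the first assertion. For the Kauffman statement I would isolate the constant term $P_0(v)$, the coefficient of $z^0$ in $\Delta$: if this is nonzero then, by Lickorish's result \cite{L} that $P_0(v)$ is simultaneously the constant term of the Kauffman polynomial under the analogous expansion, the two Kauffman polynomials must differ. This is the route that sidesteps a full Kauffman calculation, which would be out of range at this crossing number. For the final assertion I would substitute $v = s^3$ into $\Delta$ and verify that the resulting Laurent polynomial in $s$ is not identically zero.

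The delicate point, on which the theorem turns, is that neither specialisation is guaranteed to preserve the difference. The remark following Theorem \ref{g2kauffman} records a mutant pair whose Homfly polynomials share their constant term $P_0(v)$, so that the Kauffman route fails for them; one must therefore confirm that $\Delta$ is genuinely nonzero already at $z^0$. Independently, the analysis of the Kuperberg skein of the twice-punctured disc in \cite{MR} shows that all-positive orientations around the ribbons force agreement at $v = s^3$, and the orientations $++-$ and $+-$ used here are chosen precisely to escape this, so that $\Delta$ survives the substitution. The main obstacle is thus not the mechanics of the braid computation but arranging, through the prior design of the curve $P$ and the tangle $T$, that a single computed $\Delta$ survives both restrictions at once; verifying that it does is the non-routine content of the proof.
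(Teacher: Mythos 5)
Your proposal follows essentially the same route as the paper's proof: compute both Homfly polynomials, observe that they differ, invoke Lickorish's result that the constant term $P_0(v)$ in the $z$-expansion is shared with the Kauffman polynomial to get the Kauffman claim without a full Kauffman computation, and verify the difference survives the substitution $v=s^3$. The only discrepancy is practical: the paper computes the polynomials with Ochiai's program precisely because these $55$-crossing knots are \emph{not} readily expressed as closed braids, so your plan to proceed via the Morton--Short Hecke-algebra program, in parallel with the $72$-crossing case of Theorem \ref{g2kauffman}, would likely founder on the braid presentation, though this affects only the choice of computational tool and not the logic of the argument.
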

\begin{proof}
The coefficients for the Homfly polynomials of $S_{55}$ and $S^{'}_{55}$ are shown below. They were calculated using Ochiai's program \cite{Ochiai}, since the knots are not readily expressed as closed braids.  In the table the Lickorish-Millett variables $l$ and $m$ are used, with $l^2=-v^2$ and $m^2=-z^2$.
\[
\begin{array}{|c|rrrrrrrrr|}
\hline
S_{55} & l^{-4} & l^{-2} & 1 & l^2 & l^4 & l^6 & l^8 & l^{10} & l^{12} \\
\hline
1 & -36 & -122 & -143 & -67 & -23 & -32 & -23 & -5 & \\
m^2 & 276 & 986 & 1199 & 550 & 148 & 223 & 172 & 34 & -3 \\
m^4 & -757 & -3003 & -3884 & -1811 & -345 & -567 & -478 & -75 & 20 \\
m^6 & 1048 & 4688 & 6531 & 3158 & 400 & 718 & 690 & 76 & -45 \\
m^8 & -827 & -4243 & -6360 & -3217 & -253 & -499 & -585 & -39 & 34 \\
m^{10} & 388 & 2355 & 3774 & 1985 & 87 & 192 & 302 & 10 & -10 \\
m^{12} & -107 & -814 & 1386 & -746 & -15 & -38 & -92 & -1 & 1 \\
m^{14} & 16 & 171 & 308 & 166 & 1 & 3 & 15 & & \\
m^{16} & -1 & -20 & -38 & -20 & & & -1 & & \\
m^{18} & & 1 & 2 & 1 & & & & & \\
\hline
\end{array}
\]
\[
\begin{array}{|c|rrrrrrrrr|}
\hline
S^{'}_{55} & l^{-4} & l^{-2} & 1 & l^2 & l^4 & l^6 & l^8 & l^{10} & l^{12} \\
\hline
1 & -38 & -135 & -178 & -116 & -58 & -39 & -16 & & 1 \\
m^2 & 257 & 924 & 1171 & 662 & 288 & 209 & 60 & -34 & -16 \\
m^4 & -687 & -2591 & -3205 & -1587 & -562 & -448 & -72 & 142 & 54 \\
m^6 & 964 & 3913 & 4779 & 2080 & 566 & 509 & 24 & -226 & -73 \\
m^8 & -782 & -3530 & -4260 & -1623 & -319 & -334 & 10 & 172 & 43 \\
m^{10} & 377 & 1991 & 2356 & 766 & 100 & 126 & -7 & -67 & -11 \\
m^{12} & -106 & -709 & -814 & -213 & -16 & -25 & 1 & 13 & 1 \\
m^{14} & 16 & 155 & 171 & 32 & 1 & 2 & & -1 & \\
m^{16} & -1 & -19 & -20 & -2 & & & & & \\
m^{18} & & 1 & 1 & & & & & & \\
\hline
\end{array}
\]
Immediately we can see that they have different Homfly polynomials. The first row of coefficients in each array is equivalent to $P_0(v)$, and so   the result of Lickorish shows that $S_{55}$ and $S^{'}_{55}$ must also have different Kauffman polynomials.

We obtain Vassiliev invariants as the coefficients of powers of $h$ in the power series given substituting $m = i(e^{\frac{h}{2}}-e^{-\frac{h}{2}})$, $l = ie^{\frac{Nh}{2}}$. The lowest term in the difference of the power series  for $S_{55}$ and $S^{'}_{55}$ is
\[3N(N-1)(N-2)(N-3)(N+3)(N+2)(N+1)h^7,\] so again these differ in a Vassiliev invariant of degree at most $7$. 
We can also look at $sl(3)$ invariant information as a Laurent polynomial in $s$ by making the substitutions $m = i(s-s^{-1})$, $l = is^3$. The difference is: 
\begin{eqnarray*}
&{}&s^{-24}\left( {s}^{4}-{s}^{2}+1 \right)  \left( {s}^{4}+{s}^{3}+{s}
^{2}+s+1 \right)  \left( {s}^{4}-{s}^{3}+{s}^{2}-s+1 \right) \left( {s}^{8}+1 \right) \\
&{}& \ \ \ \left( {s}^{6}+{s}^{5}+{s}^{4}+{s}^{3}+{s}^{2}+s+1
 \right)\left( {s}^{6}-{s}^{5}+{s}^{4}-{s}^{3}+{s}^{2}-s+1 \right) \\
&{}& \ \ \ \left( {s}^{2}+s+1 \right) ^{2} \left( {s}^{2}-s+1 \right) ^{2}
 \left( {s}^{4}+1 \right) ^{2} \left( {s}^{2}+1 \right) ^{3} \left( s-
1 \right) ^{8} \left( s+1 \right) ^{8}
\end{eqnarray*}

Here again there is a factor of $\left( {s}^{6}+{s}^{5}+{s}^{4}+{s}^{3}+{s}^{2}+s+1
 \right)$, as in the DGST case. The factor $(s-1)^8$ shows that they differ in a Vassiliev invariant of degree $8$ invariant arising from $sl(3)$.
\end{proof}

We have also constructed a pair of $56$-crossing genus $2$ mutants following the transposing Conway tangle $G$ with $6$ crossings, using the $6$-braid $\sigma_2\sigma_3$ and its rotation $\tau_2(\sigma_1\sigma_2)=\sigma_3\sigma_4$,  shown in figure \ref{braidexample}, with $m_1=m_2=3$. These are closed $9$-braids, closely related to the original more complicated Conway and Kinoshita-Teresaka satellites. Like our $72$-crossing examples  in theorem \ref{g2kauffman} this pair have different Kauffman polynomials, because of $P_0(v)$, and also differ in a degree $7$ Vassiliev invariant, but   share the same value when $v=s^3$.

\begin{figure}[ht!]
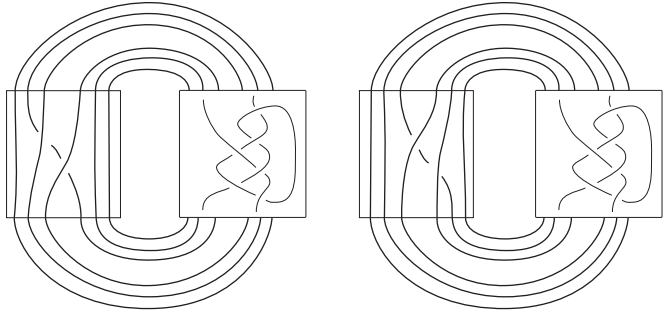

\begin{center}
\Sfiftysix\qquad 
\Tfiftysix 
\end{center}
\caption{Two closed $9$-braid genus $2$ mutants with different Homfly polynomial}\label{braidexample}
\end{figure}

\subsection{Other examples}
In \cite{DGST} there are several nice examples with $m_1=2, m_2=1$, following the pure tangle $AB$ in figure \ref{figone}, which have different Khovanov homology. 
\begin{figure}[ht!]
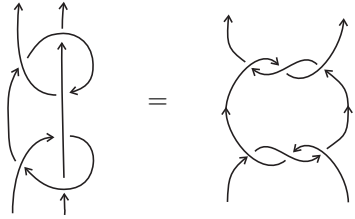

{\labellist
\small
\endlabellist
\bc
\AB\qquad=\qquad \ABreef
\ec}
\caption{The tangle $AB$ used in \cite{DGST} } \label{figone}
\end{figure}
 The simplest of these uses the curve $P$,  shown in figure \ref{khovanov} as a diagram in the disc with two holes, $S$, along with the resulting $3$-tangle $T$.  

\begin{figure}[ht!]
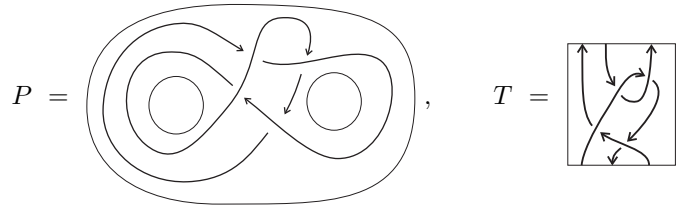

\begin{center}
$P\  = \ $
\khovanov\ , \qquad $T\  = \ $
\khovtangle 
\end{center}
\caption{A curve $P$, and related tangle $T$, giving mutants with different Khovanov homology}\label{khovanov}
\end{figure}

 It is interesting to speculate whether satellites of Conway mutant {\em knots} can ever have different Khovanov homology, given that they have a greater range of shared invariants than the general genus $2$ mutants.

There is a result of Wehrli \cite{Wehrli} giving two Conway mutant links with different Khovanov homology, but unlike Conway mutant \emph{knots} these two links are not   related by genus $2$ mutation.

\section*{Acknowledgements}

The first author would like to thank Prof. J.M.Montesinos and the Universidad Complutense, Madrid, for their hospitality and support during the preparation of this paper. The second author acknowledges the support of EPSRC under the doctoral training grant number EP/P500338/1.

\end{document}